\newtheorem{thm}{Theorem}
\newcommand{\im}{\operatorname{im}}
\begin{document}
\title{Extensions of the coeffective complex}
\author[Michael Eastwood]{Michael Eastwood${}^\dagger$}
\address{\hskip-\parindent
Mathematical Sciences Institute\\
Australian National University,\newline
ACT 0200, Australia}
\email{meastwoo@member.ams.org}
\subjclass{Primary 58J10; Secondary 53D05.}
\footnotetext{Support by the Australian Research Council.}
\begin{abstract}
The coeffective differential complex on a symplectic manifold is extended both 
in length and in scope, unifying the constructions of various other authors.
\end{abstract}
\renewcommand{\subjclassname}{\textup{2010} Mathematics Subject Classification}
\maketitle

\section{Introduction}\label{intro}
This article is both an addendum to~\cite{BEGN} and a precursor to~\cite{ES}.
In~\cite{BEGN}, we discussed the construction of differential complexes on
manifolds equipped with various geometric structures. Mostly, these geometries
were {\em parabolic\/}~\cite{CS} but there were two exceptions, specifically
{\em contact\/} geometry for which there is the {\em Rumin\/} complex~\cite{R}
and {\em symplectic\/} for which there is a very similar complex~\cite{Se},
which we dubbed the {\em Rumin-Seshadri\/} complex (it was independently
discovered by Tseng and Yau~\cite{TY}). This article extends the realm of these
complexes, specifically covering {\em conformally symplectic\/} manifolds and
{\em conformally calibrated $G_2$\/} manifolds (see, for example, \cite{Ba,V}
and \cite{FU}, respectively).

In~\cite{Bo}, T.~Bouche introduced a differential complex naturally defined on
any symplectic manifold~$M$ and coined the term {\em coeffective complex\/} for
it (see also~\cite{FIL}). If $M$ has dimension~$2n$, then it is the subcomplex
of the second half of the de~Rham complex
$$\begin{array}{lllllllllllllll}
\Lambda^{n}&\xrightarrow{\,d\,}&\Lambda^{n+1}&\xrightarrow{\,d\,}
&\cdots&\xrightarrow{\,d\,}&\Lambda^{2n-2}&\xrightarrow{\,d\,}
&\Lambda^{2n-1}&\xrightarrow{\,d\,}&\Lambda^{2n}&\to&0\\
\cup&&\cup&&&&\cup&&\,\|&&\,\|\\
\Lambda_\perp^{n}&\to&\Lambda_\perp^{n+1}&\to
&\cdots&\to&\Lambda_\perp^{2n-2}&\to
&\Lambda^{2n-1}&\to&\Lambda^{2n}&\to&0,
\end{array}$$
where, if $J$ denotes the symplectic form, then the bundle $\Lambda_\perp^k$ 
is defined as the kernel of 
$\Lambda^k\xrightarrow{\,J\wedge\underbar\enskip\,}\Lambda^{k+2}$. Under the 
canonical isomorphisms 
$$\underbrace{J\wedge J\wedge\cdots\wedge J}_{n-k}\wedge\underbar\enskip:
\Lambda^k\stackrel{\simeq\quad}{\longrightarrow}\Lambda^{2n-k}\enskip
\mbox{for }k=0,1,2,\ldots n$$
the bundle $\Lambda_\perp^{2n-k}$ may equally well be regarded as a subbundle
of~$\Lambda^k$, which we shall write as $\Lambda_\perp^k$ and, as such,
provides a natural complement to the range of
$\Lambda^{k-2}\xrightarrow{\,J\wedge\underbar\enskip\,}\Lambda^k$ for
$k=2,3,\ldots,n$. Using indices (more precisely, {\em abstract indices\/} in
the sense of~\cite{OT}), sections of the bundle $\Lambda_\perp^k$ for
$k=2,3,\ldots,n$ are precisely the $k$-forms that are {\em trace-free\/} with 
respect to $J_{ab}$, i.e.\
$$J^{ab}\omega_{abc\cdots d}=0,$$
where $J^{ab}$ is the inverse of $J_{ab}$ (let us say 
$J_{ac}J^{bc}=\delta_a{}^b$, where $\delta_a{}^b$ is the Kronecker delta). 
Thus, we may rewrite the coeffective complex as 
\begin{equation}\label{coeffective}
\Lambda_\perp^n\xrightarrow{\,d_\perp\,}\Lambda_\perp^{n-1}
\xrightarrow{\,d_\perp\,}\cdots\xrightarrow{\,d_\perp\,}\Lambda_\perp^2
\xrightarrow{\,d_\perp\,}\Lambda^1\xrightarrow{\,d_\perp\,}\Lambda^0\to 0.
\end{equation}
Bouche~\cite{Bo} showed that it is elliptic except at~$\Lambda_\perp^n$. Since 
the diagrams with exact rows
$$\begin{array}{ccccccccc}
0&\to&\Lambda^{k-2}&\xrightarrow{\,J\wedge\underbar\enskip\,}
&\Lambda^k&\to&\Lambda_\perp^k&\to&0\\
&&\downarrow&&\downarrow\\
0&\to&\Lambda^{k-1}&\xrightarrow{\,J\wedge\underbar\enskip\,}
&\Lambda^{k+1}&\to&\Lambda_\perp^{k+1}&\to&0
\end{array}$$
commute, there is a canonically defined differential complex going the other
way:
\begin{equation}\label{effective}
0\to\Lambda^0\xrightarrow{\,d\,}\Lambda^1\xrightarrow{\,d_\perp\,}
\Lambda_\perp^2\xrightarrow{\,d_\perp\,}\cdots\xrightarrow{\,d_\perp\,}
\Lambda_\perp^{n-1}\xrightarrow{\,d_\perp\,}\Lambda_\perp^n.\end{equation}
In fact, one can easily check that (\ref{coeffective}) and (\ref{effective}) 
are adjoint to each other under the pairing
$$\Lambda_\perp^k\otimes\Lambda_\perp^k
\stackrel{\simeq\quad}{\longrightarrow}
\Lambda_\perp^{2n-k}\otimes\Lambda_\perp^k
\xrightarrow{\,\underbar\enskip\,\wedge\underbar\enskip}\Lambda^{2n}.$$
The {\em Rumin-Seshadri\/} complex joins (\ref{coeffective}) and
(\ref{effective}) with a symplectically invariant {\em second order\/} linear
differential operator $d_\perp^{(2)}:\Lambda_\perp^n\to\Lambda_\perp^n$ to 
obtain an elliptic complex 
\begin{equation}
\label{RScomplex}\addtolength{\arraycolsep}{-1pt}\begin{array}{rcccccccccccc}
0&\to&\Lambda^0&\stackrel{d}{\longrightarrow}&\Lambda^1
&\stackrel{d_\perp}{\longrightarrow}&\Lambda_\perp^2
&\stackrel{d_\perp}{\longrightarrow}&\Lambda_\perp^3
&\stackrel{d_\perp}{\longrightarrow}&\cdots
&\stackrel{d_\perp}{\longrightarrow}&\Lambda_\perp^{n}\\[2pt]
&&&&&&&&&&&&\big\downarrow\makebox[0pt][l]{\scriptsize$d_\perp^{(2)}$}\\
.\,0&\leftarrow&\Lambda^0&\stackrel{d_\perp}{\longleftarrow}&\Lambda^1
&\stackrel{d_\perp}{\longleftarrow}&\Lambda_\perp^2
&\stackrel{d_\perp}{\longleftarrow}&\Lambda_\perp^3
&\stackrel{d_\perp}{\longleftarrow}&\cdots
&\stackrel{d_\perp}{\longleftarrow}&\Lambda_\perp^{n}
\end{array}\end{equation}
In four dimensions this complex is due to Smith~\cite{Sm} and in higher
dimensions it was also found by L.-S.~Tseng and S.-T.~Yau~\cite{TY} who go on
to study its cohomology on compact manifolds. The construction of
(\ref{RScomplex}) given in \cite{BEGN} will be generalised in the following
section.

\section{Conformally symplectic manifolds}
A {\em conformally symplectic structure\/} on an even dimensional manifold $M$
of dimension at least $6$ is defined by a non-degenerate $2$-form $J$ but,
instead of requiring that $J$ be closed, as one would for a symplectic
structure, one requires only that
\begin{equation}\label{def_alpha}dJ=2\alpha\wedge J\end{equation}
for some $1$-form~$\alpha$ (the factor of $2$ being chosen only for 
convenience). Non-degeneracy of $J$ implies that $\alpha$ is 
uniquely defined by~(\ref{def_alpha}). It is called the {\em Lee 
form\/}~\cite{L}. Differentiating (\ref{def_alpha}) gives
$$0=d^2J=2d\alpha\wedge J+2\alpha\wedge dJ
=2d\alpha\wedge J+4\alpha\wedge\alpha\wedge J=
2d\alpha\wedge J$$
and, as $J\wedge\underbar\enskip:\Lambda^2\to\Lambda^4$ is injective, we see
that $\alpha$ is closed. In dimension~$4$, equation (\ref{def_alpha}) defines a
unique Lee form $\alpha$ and, for the definition of conformally symplectic, we
require that $\alpha$ be closed. If we rescale $J$ by a positive smooth
function, say $\hat J=\Omega^2J$, then (\ref{def_alpha}) remains valid with 
$\alpha$ replaced by $\hat\alpha=\alpha+\Upsilon$ for 
$\Upsilon\equiv d\log\Omega$. Hence, the notion of conformally symplectic is 
invariant under such rescalings (and also in dimension $4$ since 
$d\Upsilon=0$). Locally, we may use this freedom to eliminate $\alpha$ and 
obtain an ordinary symplectic structure. Globally, however, this need not be 
the case. For example, the rescaled symplectic form 
$$J\equiv\left(1/{\|x\|}\right)^2(dx^1\wedge dx^2+dx^3\wedge dx^4+\cdots)$$
on ${\mathbb{R}}^{2n}$ is invariant under dilation $x\mapsto\lambda x$ and,
therefore, descends to a conformally symplectic structure on 
$S^1\times S^{2n-1}$ whereas there is no global symplectic form on this
manifold. If we continue to denote the inverse of $J_{ab}$ by $J^{ab}$, and
consider the vector field $X^a\equiv J^{ab}\alpha_b$, then the identities
$$\begin{array}{l}
J^{ad}J^{be}J^{cf}(\nabla_{[d}J_{ef]}\!-\!2\alpha_{[d}J_{ef]})=
J^{d[a}\nabla_dJ^{bc]}-2X^{[a}J^{bc]}\\[4pt]
J^{ad}J^{be}\big(2\nabla_{[d}\alpha_{e]}
+3X^c(\nabla_{[d}J_{ec]}\!-\!2\alpha_{[d}J_{ec]})\big)
=-X^c\nabla_{c}J^{ab}-2J^{c[a}\nabla_cX^{b]}\end{array}$$
are readily established for any torsion-free connection $\nabla_a$ and show
that a conformally symplectic structure is equivalent to a {\em Jacobi
structure\/} $(J^{ab},X^a)$ if we insist that $J^{ab}$ be non-degenerate (as
discussed in~\cite{Ba}). 
\begin{thm}\label{conf_symp_thm}
On any conformally symplectic manifold $(M,J)$, there is a canonically defined
elliptic complex
\begin{equation}\label{conformalRS}\begin{array}{rcccccccccccc}
0&\to&\Lambda^0&\to&\Lambda^1
&\to&\Lambda_\perp^2
&\to&\Lambda_\perp^3
&\to&\cdots
&\to&\Lambda_\perp^{n}\\[2pt]
&&&&&&&&&&&&\big\downarrow\\
0&\leftarrow&\Lambda^0&\leftarrow&\Lambda^1
&\leftarrow&\Lambda_\perp^2
&\leftarrow&\Lambda_\perp^3
&\leftarrow&\cdots
&\leftarrow&\Lambda_\perp^{n}
\end{array}\end{equation}
where $\Lambda_\perp^k$ denotes the bundle of $k$-forms that are trace-free
with respect to~$J$. All operators are first order except for the middle
operator, which is second order. In the symplectic case, the second half of
the complex coincides with the coeffective complex. This complex is locally 
exact except at $\Lambda^0$ and $\Lambda^1$ near the beginning.
\end{thm}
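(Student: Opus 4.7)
The plan is to mimic the Rumin--Seshadri construction of~\cite{BEGN}, exploiting that $dJ=2\alpha\wedge J$ together with $d\alpha=0$ is exactly what is needed for the underlying diagrams to commute. For the top row, identify $\Lambda_\perp^k$ with the quotient $\Lambda^k/(J\wedge\Lambda^{k-2})$ for $k\le n$; the de~Rham differential descends to this quotient because
$$d(J\wedge\beta)=dJ\wedge\beta+J\wedge d\beta=J\wedge(d\beta+2\alpha\wedge\beta)\in J\wedge\Lambda^{k-1},$$
and this gives first-order operators $d_\perp:\Lambda_\perp^k\to\Lambda_\perp^{k+1}$ for $k<n$ that square to zero. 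For the bottom row, view $\Lambda_\perp^k\subset\Lambda^{2n-k}$ as the subbundle of $J$-primitive forms; if $J\wedge\omega=0$ then $0=d(J\wedge\omega)=2\alpha\wedge J\wedge\omega+J\wedge d\omega=J\wedge d\omega$, so $d$ restricts to give the required operators. In the symplectic case ($\alpha=0$) these recover Bouche's coeffective complex~(\ref{coeffective}).

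The middle second-order operator $d_\perp^{(2)}:\Lambda_\perp^n\to\Lambda_\perp^n$ is the usual zig-zag from~\cite{BEGN}: for a primitive $n$-form $\omega$, apply $d$ and then solve $d\omega=J\wedge\eta$ uniquely using the Lefschetz isomorphism $J\wedge:\Lambda^{n-1}\to\Lambda^{n+1}$, and set $d_\perp^{(2)}\omega:=[d\eta]$ in $\Lambda^n/(J\wedge\Lambda^{n-2})$. The identity $J\wedge(d\eta+2\alpha\wedge\eta)=0$, which follows from $d^2\omega=0$, is what one uses to verify that the zig-zag compositions at the middle vanish and that the resulting operator is conformally invariant.

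Ellipticity is a pointwise symbolic check: $\alpha$ contributes only zero-order terms, so the principal symbol complex of~(\ref{conformalRS}) at each point agrees with that of the symplectic complex~(\ref{RScomplex}), whose ellipticity is established in~\cite{BEGN}. Local exactness reduces to the symplectic case similarly: since $\alpha$ is closed, one may locally write $\alpha=d\phi$ by the Poincaré lemma, after which $\hat J=e^{-2\phi}J$ is genuinely symplectic; by the conformal invariance of our construction, our complex for $J$ is locally isomorphic to the Rumin--Seshadri complex for $\hat J$, whose local exactness outside of $\Lambda^0$ and $\Lambda^1$ is already established in~\cite{BEGN}. The exceptional failures at the beginning are caused by locally constant functions (at $\Lambda^0$) and by $1$-forms $\omega$ with $d\omega\in J\wedge\Lambda^0$ that are not exact (at $\Lambda^1$).

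The main technical hurdle is proving the conformal invariance of $d_\perp^{(2)}$: rescaling $J\mapsto\Omega^2J$ sends the Lefschetz lift $\eta$ to $\Omega^{-2}\eta$, and one must verify that the correction terms produced by differentiating the conformal factor land in $J\wedge\Lambda^{n-2}$ and hence vanish in the target quotient. Aside from this calculation, the remainder of the argument is essentially a transcription of~\cite{BEGN}.
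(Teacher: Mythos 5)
Your overall architecture---a quotient half, a subbundle half, a second-order splice, symbols for ellipticity, and a local reduction to the symplectic case---has the right shape, but the operators you choose in the first half and in the middle are not the correct ones and, once $\alpha\neq0$, they do not even form a complex. The identity you quote, $d(J\wedge\beta)=J\wedge(d\beta+2\alpha\wedge\beta)$, says that it is the \emph{twisted} differential $\omega\mapsto d\omega-2\alpha\wedge\omega$ upstairs that intertwines with the ordinary $d$ downstairs through $J\wedge\underbar\enskip$. Accordingly, the paper forms a two-row double complex whose top row is $(\Lambda^\bullet,\,d-2\alpha\wedge\underbar\enskip)$ (a complex because $d\alpha=0$, and locally exact away from $\Lambda^0$ because locally $\alpha=d\phi$ and multiplication by $e^{-2\phi}$ conjugates it to the de~Rham complex), whose bottom row is the de~Rham complex, and whose columns are $J\wedge\underbar\enskip$; the operators of (\ref{conformalRS}), the vanishing of all compositions, the local cohomology ${\mathcal{H}}^0=\{f:df=2f\alpha\}$ and ${\mathcal{H}}^1={\mathbb{R}}$, and (running the same double complex on symbols) ellipticity all fall out of its two spectral sequences. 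You instead put the plain $d$ on the quotients. That operator does descend and squares to zero there, but it does not splice with the coeffective half: writing $d\mu=\omega_0+J\wedge\gamma$ with $\omega_0$ primitive, for $[\mu]\in\Lambda_\perp^{n-1}$, one finds $d\omega_0=-J\wedge(d\gamma+2\alpha\wedge\gamma)$, so your $\eta$ is $-(d\gamma+2\alpha\wedge\gamma)$ and $d_\perp^{(2)}(d_\perp[\mu])=[2\alpha\wedge d\gamma]$, which does not lie in $J\wedge\Lambda^{n-2}$ for generic $\mu$ when $\alpha\neq0$. The composition at the middle is therefore nonzero.

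Relatedly, the ``main technical hurdle'' you defer---conformal invariance of $d_\perp^{(2)}$---is in fact false for your operator (the correction $\Upsilon\wedge\eta$ does not lie in $J\wedge\Lambda^{n-2}$), and the theorem does not need it: the complex is attached to the pair $(M,J)$, not to the conformal class of~$J$. What the local-exactness argument genuinely uses is that the top row $(\Lambda^\bullet,\,d-2\alpha\wedge\underbar\enskip)$ is locally conjugate to the de~Rham complex via multiplication by $e^{-2\phi}$ where $\alpha=d\phi$; this is an isomorphism of the entire double complex ($e^{-2\phi}$ on the top row, the identity on the bottom row), not an invariance of each induced operator separately. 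Your symbol argument for ellipticity is sound in spirit, and your second half ($d$ restricted to $\ker(J\wedge\underbar\enskip)$) agrees with the paper's. To repair the rest, replace $d$ by $d-2\alpha\wedge\underbar\enskip$ on the quotients and define the middle operator by solving $(d-2\alpha\wedge\underbar\enskip)\tilde\omega=J\wedge\eta$ and outputting $d\eta$, which is then automatically primitive because $(d-2\alpha\wedge\underbar\enskip)^2=0$; equivalently, run the spectral sequence of the double complex as the paper does.
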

\begin{proof} Consider the diagram
\begin{equation}\label{double_complex}\begin{array}{ccccccl}
\longrightarrow&\Lambda^p&\xrightarrow{\,d-2\alpha\wedge\underbar\enskip\,}
&\Lambda^{p+1}&\xrightarrow{\,d-2\alpha\wedge\underbar\enskip\,}
&\Lambda^{p+2}&\longrightarrow\\
&\Big\uparrow\makebox[0pt][l]{\scriptsize$J\wedge\underbar\enskip$}
&&\Big\uparrow\makebox[0pt][l]{\scriptsize$J\wedge\underbar\enskip$}
&&\Big\uparrow\makebox[0pt][l]{\scriptsize$J\wedge\underbar\enskip$}\\
\longrightarrow&\Lambda^{p-2}&\xrightarrow{\quad d\quad}
&\Lambda^{p-1}&\xrightarrow{\quad d\quad}
&\Lambda^p&\longrightarrow.
\end{array}\end{equation}
The bottom row is the de~Rham complex and, in particular, is locally exact
except at~$\Lambda^0$. Since $d\alpha=0$, the same is true of the top row.
Since $dJ=2\alpha\wedge J$, the diagram commutes. Now consider the columns. In 
the middle, non-degeneracy of $J$ ensures that 
$$J\wedge\underbar\enskip:\Lambda^{n-1}\to\Lambda^{n+1}$$
is an isomorphism. To the left of this, we have injections and, to the right,
we have surjections. As discussed \S\ref{intro}, the trace-free forms 
$\Lambda_\perp^k$ may be canonically identified with the cokernel of 
$$J\wedge\underbar\enskip:\Lambda^{k-2}\to\Lambda^k
\enskip\mbox{for }k=2,3,\ldots,n$$
but also with the kernel of
$$J\wedge\underbar\enskip:\Lambda^{2n-k}\to\Lambda^{2n-k+2}
\enskip\mbox{for }k=2,3,\ldots,n.$$
The spectral sequence of a double complex completes the proof.
\end{proof}
\noindent Explicit formul{\ae} for the operators in this complex can be given
by using indices and an arbitrarily chosen torsion-free connection but are
quite complicated since they necessarily employ the decomposition of
arbitrary $k$-forms into their trace-free parts
$$\Lambda^k=\Lambda_\perp^k\oplus\Lambda_\perp^{k-2}\oplus
\Lambda_\perp^{k-4}\oplus\cdots \enskip\mbox{for }k=2,3,\ldots,n$$
corresponding to the branching of $\Lambda^k{\mathbb{R}}^{2n}$ under 
${\mathrm{Sp}}(2n,{\mathbb{R}})\subset{\mathrm{SL}}(2n,{\mathrm{R}})$ 
(cf.~the combinatorial formul{\ae} in~\cite[part II,\:\S2.1]{TY}).  

To discuss the global cohomology of the complex (\ref{conformalRS}) let us 
relabel its terms as $B^r$ for $r=0,1,2,\ldots,2n,2n+1$ and define
\begin{equation}\label{conformal_symplectic_cohomology}
H_J^r(M)\equiv\frac{\ker:\Gamma(M,B^r)\to\Gamma(M,B^{r+1})}
{\im:\Gamma(M,B^{r-1})\to\Gamma(M,B^r)}.\end{equation}
In comparison with~\cite{FIL} in the symplectic case, we have 
$$H_J^r(M)=H^{r-1}({\mathcal{A}}(M))\enskip\mbox{for }r=n+2,n+3,\ldots,2n+1$$ 
for their {\em coeffective cohomology\/} but now, for compact~$M$, we have
finite-dimensional vector spaces for all $r=0,1,2,\ldots,2n,2n+1$. Also in the 
symplectic case, these cohomologies were introduced and studied by Tseng and 
Yau~\cite{TY} and our notation compares as follows.
$$\begin{array}{ll}H_J^r(M)=PH_{\partial_+}^r(M)&\mbox{for }0\leq r<n\\[4pt]
H_J^n(M)=PH_{dd^\Lambda}^n(M)&H_J^{n+1}(M)=PH^n_{d+d^\Lambda}(M)\\[4pt]
H_J^r(M)=PH_{\partial_-}^{2n+1-r}(M)&\mbox{for }n+1<r\leq 2n+1 
\end{array}$$
(Tseng and Yau refer to these and similar cohomologies as `primitive.')
According to Theorem~\ref{conf_symp_thm}, the cohomology
${\mathcal{H}}^\bullet$ of (\ref{conformalRS}) on the level of sheaves of
germs of smooth functions occurs only at $B^0$ and $B^1$ and, from its proof,
we see that ${\mathcal{H}}^1={\mathbb{R}}$. Also ${\mathcal{H}}^0$ is a locally
constant sheaf. Specifically,
$${\mathcal{H}}^0=\{f\mbox{ s.t\ }df-2f\alpha=0\},$$ 
and may equivalently be viewed as parallel sections of the trivial bundle
equipped with the flat connection defined by $-2\alpha$ as connection form. In
the symplectic case, we have ${\mathcal{H}}^0={\mathbb{R}}$. Evidently, the top
row of (\ref{double_complex}) provides a fine resolution of ${\mathcal{H}}^0$
and so the sheaf cohomology $H^r(M,{\mathcal{H}}^0)$ may be identified as the
cohomology of the complex $\Gamma(M,\Lambda^\bullet)$ with 
$\omega\mapsto d\omega-2\alpha\wedge\omega$ as differential. The following 
theorem extends the long exact sequence~\cite[(5)]{FIL}.
\begin{thm}\label{cohomology}
On a conformally symplectic manifold $(M,J)$, we have
$$H_J^0(M)=H^0(M,{\mathcal{H}}^0),\quad 
H_J^{2n+1}(M)=H^{2n}(M,{\mathbb{R}}),$$
and a long exact sequence
$$\begin{array}l
0\to H^1(M,{\mathcal{H}}^0)\to H_J^1(M)\to H^0(M,{\mathbb{R}})
\xrightarrow{\,\delta\,}H^2(M,{\mathcal{H}}^0)\to\cdots\\
\phantom{0\to H^1(M,{\mathcal{H}}^0)}
\to H_J^r(M)\to H^{r-1}(M,{\mathbb{R}})
\xrightarrow{\,\delta\,}H^{r+1}(M,{\mathcal{H}}^0)\to\cdots\\
\phantom{0\to H^1(M,{\mathcal{H}}^0)}
\to H_J^{2n}(M)\to H^{2n-1}(M,{\mathbb{R}})
\to 0,\vphantom{\xrightarrow{\,\delta\,}}
\end{array}$$
where $\delta:H^{r-1}(M,{\mathbb{R}})\to H^{r+1}(M,{\mathcal{H}}^0)$ is given 
by cup product with the cohomology class $[J]\in H^2(M,{\mathcal{H}}^0)$.
\end{thm}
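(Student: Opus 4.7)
The plan is to realise $H_J^{\bullet}(M)$ as the cohomology of the total complex of the double complex (\ref{double_complex}) applied to global sections, and then to extract the long exact sequence from the \emph{other} spectral sequence of the same double complex. Once passed to $\Gamma(M,-)$, the top row of (\ref{double_complex}) has cohomology $H^p(M,\mathcal{H}^0)$ (by the fine resolution argument noted just before the theorem), and the bottom row is the de~Rham complex, computing $H^p(M,\mathbb{R})$. The identity $dJ=2\alpha\wedge J$ makes $J\wedge:\Gamma(\Lambda^\bullet)\to\Gamma(\Lambda^{\bullet+2})$ a chain map from the bottom row into a shift of the top row, and on cohomology it represents cup product with the class $[J]\in H^2(M,\mathcal{H}^0)$ associated to $J$ itself (using $(d-2\alpha\wedge)J=0$). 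Write $T^\bullet$ for the resulting total complex.

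Step~1 is to identify $H^{r}(T^\bullet)=H_J^{r-1}(M)$ using the column-filtration spectral sequence. The $E_1$ page is given by the vertical kernels and cokernels of $J\wedge$; by non-degeneracy of $J$ these are precisely the primitive bundles $\Lambda_\perp^k$ assembled in the order of (\ref{conformalRS}). The $d_1$ differential produces the first-order $d_\perp$ arrows on both sides of the middle, and the sole surviving higher differential $d_2:E_2^{n,1}\to E_2^{n+2,0}$ is the second-order operator $d_\perp^{(2)}$ --- this is precisely the construction of (\ref{conformalRS}) given in the proof of Theorem~\ref{conf_symp_thm}. Having only two rows, the spectral sequence collapses at $E_3$; and at each total degree at most one column contributes, so no extension problem obscures the identification.

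Step~2 runs the row-filtration spectral sequence; equivalently, $T^\bullet$ is the mapping cone of the chain map $J\wedge$. Its $E_1$ page is $H^p(M,\mathcal{H}^0)$ on top and (shifted) $H^p(M,\mathbb{R})$ on the bottom, with $d_1$ the cup-product map $[J]\cup-$, and with only two rows everything beyond this differential vanishes, so the two-step filtration on $H^r(T^\bullet)$ splices into a long exact sequence
\[
\cdots\to H^r(M,\mathcal{H}^0)\to H^{r+1}(T^\bullet)\to H^{r-1}(M,\mathbb{R})\xrightarrow{[J]\cup}H^{r+1}(M,\mathcal{H}^0)\to\cdots
\]
Substituting the identification from Step~1 gives the sequence in the statement. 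The endpoint identifications $H_J^0(M)=H^0(M,\mathcal{H}^0)$ and $H_J^{2n+1}(M)=H^{2n}(M,\mathbb{R})$ then fall out from the two ends of this sequence, where $H^{-1}(M,\mathbb{R})$ and $H^{2n+1}(M,\mathcal{H}^0)$ vanish for dimensional reasons.

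The hard part will be Step~1, specifically the verification that the spectral sequence's $d_2$ genuinely coincides with the concrete second-order operator $d_\perp^{(2)}$ in (\ref{conformalRS}). Since (\ref{conformalRS}) was itself constructed from this very spectral sequence in the proof of Theorem~\ref{conf_symp_thm}, this is essentially a consistency check --- unravelling the two descriptions in parallel --- rather than a new computation; once it is settled, Step~2 is formal and works entirely at the level of abstract two-row double complexes.
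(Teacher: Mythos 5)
Your argument is correct, and it proves the theorem; but it is implemented somewhat differently from the paper, which simply invokes the hypercohomology spectral sequence of $B^\bullet$ regarded as a complex of sheaves: there the $E_2$-page is $H^p(M,{\mathcal{H}}^q)$ with only the rows $q=0,1$ nonzero (the local cohomology having been computed in the proof of Theorem~\ref{conf_symp_thm}), and the abutment is identified with $H_J^{\bullet}(M)$ at no cost because the $B^r$ are fine sheaves. You instead run both spectral sequences of the double complex~(\ref{double_complex}) applied to global sections --- precisely the ``diagram chase on the double complex'' that the paper flags parenthetically as the equivalent alternative. The price of your route is Step~1, which must re-derive the identification of the total cohomology with $H_J^{\bullet}(M)$; as you note, this is only a re-reading of the proof of Theorem~\ref{conf_symp_thm} with global sections in place of germs, since the operators are local and the column-filtration spectral sequence therefore has the same differentials (and the one-nonzero-entry-per-total-degree observation disposes of any extension or convergence issue). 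The dividend is that your connecting homomorphism is visibly induced by wedging with $J$ at the cochain level, so its identification with cup product by $[J]\in H^2(M,{\mathcal{H}}^0)$ (using $(d-2\alpha\wedge)J=0$) is immediate, whereas in the paper's formulation this identification of $d_2$ is asserted rather than exhibited. Your handling of the two endpoint isomorphisms, via the vanishing of $H^{-1}(M,{\mathbb{R}})$ and of $H^r(M,{\mathcal{H}}^0)$ for $r>2n$, is also fine.
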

\begin{proof}
The hypercomology spectral sequence for the complex $B^\bullet$ as a complex 
of sheaves reads, at the $E_2$-level 
$$\begin{picture}(350,50)
\put(0,0){\vector(1,0){350}}
\put(0,0){\vector(0,1){50}}
\put(30,10){\makebox(0,0){$H^0(M,{\mathcal{H}}^0)$}}
\put(110,10){\makebox(0,0){$H^1(M,{\mathcal{H}}^0)$}}
\put(190,10){\makebox(0,0){$H^2(M,{\mathcal{H}}^0)$}}
\put(270,10){\makebox(0,0){$H^3(M,{\mathcal{H}}^0)$}}
\put(340,10){\makebox(0,0){$\cdots$}}
\put(30,40){\makebox(0,0){$H^0(M,{\mathbb{R}})$}}
\put(110,40){\makebox(0,0){$H^1(M,{\mathbb{R}})$}}
\put(190,40){\makebox(0,0){$H^2(M,{\mathbb{R}})$}}
\put(270,40){\makebox(0,0){$H^3(M,{\mathbb{R}})$}}
\put(340,40){\makebox(0,0){$\cdots$}}
\put(60,38){\vector(4,-1){100}}
\put(140,38){\vector(4,-1){100}}
\put(220,38){\vector(4,-1){100}}
\end{picture}$$
and the desired conclusions follow. 
\end{proof}
\noindent (Spectral sequence reasoning can always be replaced by an appropriate
diagram chase, in this case on the double complex~(\ref{double_complex}).)

As an application of Theorem~\ref{cohomology}, if we consider complex
projective space ${\mathbb{CP}}_n$ with $J$ its usual K\"ahler form, then
$${[J]}\cup\underbar\enskip:H^{r-1}({\mathbb{CP}}_n,{\mathbb{R}})\to
H^{r+1}({\mathbb{CP}}_n,{\mathbb{R}})$$
is an isomorphism for $1\leq r\leq 2n-1$. Therefore,
$$H^0_J({\mathbb{CP}})={\mathbb{R}},\quad 
H_J^r({\mathbb{CP}}_n)=0\enskip\mbox{for } 1\leq r\leq 2n,\quad
H_J^{2n+1}({\mathbb{CP}}_n)={\mathbb{R}}.$$
More generally, Theorem~\ref{cohomology} shows that the cohomology $H_J^r(M)$
of a symplectic manifold is determined by its de~Rham cohomology and the action
of the symplectic class $[J]\in H^2(M,{\mathbb{R}})$. In particular, there are
evident inequalities concerning $\dim H_J^r$ and the Betti numbers of a compact
symplectic manifold (including those of \cite[Theorem~3.1]{FIL}).

\section{Conformally calibrated $G_2$-manifolds}
Following~\cite{FU}, a {\em conformally calibrated $G_2$-manifold\/} is defined
as a $G_2$-manifold $(M,\phi)$ such that
\begin{equation}\label{G2alpha}d\phi=2\alpha\wedge\phi\end{equation}
for some $1$-form~$\alpha$. Recall~\cite{B,DNW,FU} that $\phi$ is the 
{\em fundamental $3$-form} defining a reduction of structure group on the
$7$-dimensional smooth manifold $M$ from ${\mathrm{GL}}(7,{\mathbb{R}})$ to
$G_2\subset{\mathrm{SO}}(7)\subset{\mathrm{GL}}(7,{\mathbb{R}})$. In parallel
with the symplectic case, the form $\phi$ may be locally rescaled so that it is
closed (and a $G_2$-manifold with closed fundamental form is said to be
calibrated.)
As in the symplectic case and as detailed in~\cite{FU},
the form~$\phi$, pointwise sometimes known as the {\em Cayley
form\/}~\cite{DNW}, is sufficiently non-degenerate that
\begin{equation}\label{linalg}\begin{array}{ll}
\phi\wedge\underbar\enskip:\Lambda^k\longrightarrow\Lambda^{k+3}
&\mbox{is injective for }k=0,1\\
\phi\wedge\underbar\enskip:\Lambda^2\stackrel{\simeq\quad}{\longrightarrow}
\Lambda^5\\
\phi\wedge\underbar\enskip:\Lambda^k\longrightarrow\Lambda^{k+3}
&\mbox{is surjective for }k=3,4.
\end{array}\end{equation}
One way to see this is to decompose the forms on $M$ into $G_2$-irreducibles
\begin{equation}\label{decompose}\begin{array}{c}
\Lambda^0=
\begin{picture}(24,5)
\put(5.4,0){\line(1,0){14.8}}
\put(4,1.6){\line(1,0){16}}
\put(5.4,3.2){\line(1,0){14.8}}
\put(4,1.2){\makebox(0,0){$\bullet$}}
\put(20,1.2){\makebox(0,0){$\bullet$}}
\put(12,1.5){\makebox(0,0){$\langle$}}
\put(4,8){\makebox(0,0){$\scriptstyle 0$}}
\put(20,8){\makebox(0,0){$\scriptstyle 0$}}
\end{picture}
\qquad\Lambda^1=
\begin{picture}(24,5)
\put(5.4,0){\line(1,0){14.8}}
\put(4,1.6){\line(1,0){16}}
\put(5.4,3.2){\line(1,0){14.8}}
\put(4,1.2){\makebox(0,0){$\bullet$}}
\put(20,1.2){\makebox(0,0){$\bullet$}}
\put(12,1.5){\makebox(0,0){$\langle$}}
\put(4,8){\makebox(0,0){$\scriptstyle 1$}}
\put(20,8){\makebox(0,0){$\scriptstyle 0$}}
\end{picture}
\qquad\Lambda^2=
\begin{picture}(24,5)
\put(5.4,0){\line(1,0){14.8}}
\put(4,1.6){\line(1,0){16}}
\put(5.4,3.2){\line(1,0){14.8}}
\put(4,1.2){\makebox(0,0){$\bullet$}}
\put(20,1.2){\makebox(0,0){$\bullet$}}
\put(12,1.5){\makebox(0,0){$\langle$}}
\put(4,8){\makebox(0,0){$\scriptstyle 0$}}
\put(20,8){\makebox(0,0){$\scriptstyle 1$}}
\end{picture}\oplus\begin{picture}(24,5)
\put(5.4,0){\line(1,0){14.8}}
\put(4,1.6){\line(1,0){16}}
\put(5.4,3.2){\line(1,0){14.8}}
\put(4,1.2){\makebox(0,0){$\bullet$}}
\put(20,1.2){\makebox(0,0){$\bullet$}}
\put(12,1.5){\makebox(0,0){$\langle$}}
\put(4,8){\makebox(0,0){$\scriptstyle 1$}}
\put(20,8){\makebox(0,0){$\scriptstyle 0$}}
\end{picture}\\[10pt]
\Lambda^3=
\begin{picture}(24,5)
\put(5.4,0){\line(1,0){14.8}}
\put(4,1.6){\line(1,0){16}}
\put(5.4,3.2){\line(1,0){14.8}}
\put(4,1.2){\makebox(0,0){$\bullet$}}
\put(20,1.2){\makebox(0,0){$\bullet$}}
\put(12,1.5){\makebox(0,0){$\langle$}}
\put(4,8){\makebox(0,0){$\scriptstyle 2$}}
\put(20,8){\makebox(0,0){$\scriptstyle 0$}}
\end{picture}\oplus\begin{picture}(24,5)
\put(5.4,0){\line(1,0){14.8}}
\put(4,1.6){\line(1,0){16}}
\put(5.4,3.2){\line(1,0){14.8}}
\put(4,1.2){\makebox(0,0){$\bullet$}}
\put(20,1.2){\makebox(0,0){$\bullet$}}
\put(12,1.5){\makebox(0,0){$\langle$}}
\put(4,8){\makebox(0,0){$\scriptstyle 1$}}
\put(20,8){\makebox(0,0){$\scriptstyle 0$}}
\end{picture}\oplus\begin{picture}(24,5)
\put(5.4,0){\line(1,0){14.8}}
\put(4,1.6){\line(1,0){16}}
\put(5.4,3.2){\line(1,0){14.8}}
\put(4,1.2){\makebox(0,0){$\bullet$}}
\put(20,1.2){\makebox(0,0){$\bullet$}}
\put(12,1.5){\makebox(0,0){$\langle$}}
\put(4,8){\makebox(0,0){$\scriptstyle 0$}}
\put(20,8){\makebox(0,0){$\scriptstyle 0$}}
\end{picture}\qquad\Lambda^4=
\begin{picture}(24,5)
\put(5.4,0){\line(1,0){14.8}}
\put(4,1.6){\line(1,0){16}}
\put(5.4,3.2){\line(1,0){14.8}}
\put(4,1.2){\makebox(0,0){$\bullet$}}
\put(20,1.2){\makebox(0,0){$\bullet$}}
\put(12,1.5){\makebox(0,0){$\langle$}}
\put(4,8){\makebox(0,0){$\scriptstyle 2$}}
\put(20,8){\makebox(0,0){$\scriptstyle 0$}}
\end{picture}\oplus\begin{picture}(24,5)
\put(5.4,0){\line(1,0){14.8}}
\put(4,1.6){\line(1,0){16}}
\put(5.4,3.2){\line(1,0){14.8}}
\put(4,1.2){\makebox(0,0){$\bullet$}}
\put(20,1.2){\makebox(0,0){$\bullet$}}
\put(12,1.5){\makebox(0,0){$\langle$}}
\put(4,8){\makebox(0,0){$\scriptstyle 1$}}
\put(20,8){\makebox(0,0){$\scriptstyle 0$}}
\end{picture}\oplus\begin{picture}(24,5)
\put(5.4,0){\line(1,0){14.8}}
\put(4,1.6){\line(1,0){16}}
\put(5.4,3.2){\line(1,0){14.8}}
\put(4,1.2){\makebox(0,0){$\bullet$}}
\put(20,1.2){\makebox(0,0){$\bullet$}}
\put(12,1.5){\makebox(0,0){$\langle$}}
\put(4,8){\makebox(0,0){$\scriptstyle 0$}}
\put(20,8){\makebox(0,0){$\scriptstyle 0$}}
\end{picture}\\[10pt]
\Lambda^5=
\begin{picture}(24,5)
\put(5.4,0){\line(1,0){14.8}}
\put(4,1.6){\line(1,0){16}}
\put(5.4,3.2){\line(1,0){14.8}}
\put(4,1.2){\makebox(0,0){$\bullet$}}
\put(20,1.2){\makebox(0,0){$\bullet$}}
\put(12,1.5){\makebox(0,0){$\langle$}}
\put(4,8){\makebox(0,0){$\scriptstyle 0$}}
\put(20,8){\makebox(0,0){$\scriptstyle 1$}}
\end{picture}\oplus\begin{picture}(24,5)
\put(5.4,0){\line(1,0){14.8}}
\put(4,1.6){\line(1,0){16}}
\put(5.4,3.2){\line(1,0){14.8}}
\put(4,1.2){\makebox(0,0){$\bullet$}}
\put(20,1.2){\makebox(0,0){$\bullet$}}
\put(12,1.5){\makebox(0,0){$\langle$}}
\put(4,8){\makebox(0,0){$\scriptstyle 1$}}
\put(20,8){\makebox(0,0){$\scriptstyle 0$}}
\end{picture}\qquad
\Lambda^6=
\begin{picture}(24,5)
\put(5.4,0){\line(1,0){14.8}}
\put(4,1.6){\line(1,0){16}}
\put(5.4,3.2){\line(1,0){14.8}}
\put(4,1.2){\makebox(0,0){$\bullet$}}
\put(20,1.2){\makebox(0,0){$\bullet$}}
\put(12,1.5){\makebox(0,0){$\langle$}}
\put(4,8){\makebox(0,0){$\scriptstyle 1$}}
\put(20,8){\makebox(0,0){$\scriptstyle 0$}}
\end{picture}\qquad
\Lambda^7=
\begin{picture}(24,5)
\put(5.4,0){\line(1,0){14.8}}
\put(4,1.6){\line(1,0){16}}
\put(5.4,3.2){\line(1,0){14.8}}
\put(4,1.2){\makebox(0,0){$\bullet$}}
\put(20,1.2){\makebox(0,0){$\bullet$}}
\put(12,1.5){\makebox(0,0){$\langle$}}
\put(4,8){\makebox(0,0){$\scriptstyle 0$}}
\put(20,8){\makebox(0,0){$\scriptstyle 0$}}
\end{picture}
\end{array}\end{equation}
and check (\ref{linalg}) on the level of highest weights. The canonical Hodge
isomorphism $\Lambda^k\cong\Lambda^{7-k}$ is evident in this decomposition.
Parallel to the symplectic case let us write
$$\Lambda_\perp^4\equiv\ker\phi\wedge\underbar\enskip:\Lambda^3\to\Lambda^6
\qquad\qquad
\Lambda_\perp^3\equiv\ker\phi\wedge\underbar\enskip:\Lambda^4\to\Lambda^7$$
and, by inspecting~(\ref{decompose}), note that 
$$\Lambda_\perp^3=\begin{picture}(24,5)
\put(5.4,0){\line(1,0){14.8}}
\put(4,1.6){\line(1,0){16}}
\put(5.4,3.2){\line(1,0){14.8}}
\put(4,1.2){\makebox(0,0){$\bullet$}}
\put(20,1.2){\makebox(0,0){$\bullet$}}
\put(12,1.5){\makebox(0,0){$\langle$}}
\put(4,8){\makebox(0,0){$\scriptstyle 2$}}
\put(20,8){\makebox(0,0){$\scriptstyle 0$}}
\end{picture}\oplus\begin{picture}(24,5)
\put(5.4,0){\line(1,0){14.8}}
\put(4,1.6){\line(1,0){16}}
\put(5.4,3.2){\line(1,0){14.8}}
\put(4,1.2){\makebox(0,0){$\bullet$}}
\put(20,1.2){\makebox(0,0){$\bullet$}}
\put(12,1.5){\makebox(0,0){$\langle$}}
\put(4,8){\makebox(0,0){$\scriptstyle 1$}}
\put(20,8){\makebox(0,0){$\scriptstyle 0$}}
\end{picture}\qquad\qquad
\Lambda_\perp^4=\begin{picture}(24,5)
\put(5.4,0){\line(1,0){14.8}}
\put(4,1.6){\line(1,0){16}}
\put(5.4,3.2){\line(1,0){14.8}}
\put(4,1.2){\makebox(0,0){$\bullet$}}
\put(20,1.2){\makebox(0,0){$\bullet$}}
\put(12,1.5){\makebox(0,0){$\langle$}}
\put(4,8){\makebox(0,0){$\scriptstyle 2$}}
\put(20,8){\makebox(0,0){$\scriptstyle 0$}}
\end{picture}\oplus\begin{picture}(24,5)
\put(5.4,0){\line(1,0){14.8}}
\put(4,1.6){\line(1,0){16}}
\put(5.4,3.2){\line(1,0){14.8}}
\put(4,1.2){\makebox(0,0){$\bullet$}}
\put(20,1.2){\makebox(0,0){$\bullet$}}
\put(12,1.5){\makebox(0,0){$\langle$}}
\put(4,8){\makebox(0,0){$\scriptstyle 0$}}
\put(20,8){\makebox(0,0){$\scriptstyle 0$}}
\end{picture}$$
also provide canonical complements to the ranges of 
$\phi\wedge\underbar\enskip:\Lambda^0\to\Lambda^3$ and
$\phi\wedge\underbar\enskip:\Lambda^1\to\Lambda^4$, respectively. {From}
(\ref{linalg}) we see that, as in the conformally symplectic case, $\alpha$ is
uniquely defined by (\ref{G2alpha}) and is closed.

\begin{thm}\label{conf_cal_G2}
On any conformally calibrated $G_2$-manifold $(M,\phi)$, there is a 
canonically defined elliptic complex
\begin{equation}\label{confG2complex}\begin{array}{rcccccccccc}
0&\to&\Lambda^0&\to&\Lambda^1&\to&\Lambda^2&\to&\Lambda_\perp^3
&\to&\Lambda_\perp^4\\[2pt]
&&&&&&&&&&\big\downarrow\\[2pt]
0&\leftarrow&\Lambda^0
&\leftarrow&\Lambda^1&\leftarrow&\Lambda^2&\leftarrow&\Lambda_\perp^3
&\leftarrow&\Lambda_\perp^4
\end{array}\end{equation}
All differential operators are first order except for the middle operator,
which is second order. The second half of this complex coincides with the
coeffective complex defined in\/~\cite{FIL}. It is locally exact except
at $\Lambda^0$ and $\Lambda^2$ near the beginning.
\end{thm}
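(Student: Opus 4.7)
The plan is to mimic the proof of Theorem~\ref{conf_symp_thm} with the fundamental $3$-form $\phi$ playing the role of the symplectic form~$J$, using the linear algebra in~(\ref{linalg}) and the defining relation~(\ref{G2alpha}) in place of their symplectic analogues. I would begin by writing down a double complex analogous to~(\ref{double_complex}), with vertical arrows $\phi\wedge\underbar\enskip:\Lambda^{p-3}\to\Lambda^p$ (now a shift of~$3$ rather than~$2$), bottom row the ordinary de~Rham complex, and top row carrying the twisted differential $d-2\alpha\wedge\underbar\enskip$.

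Three standard properties must then be checked. First, the squares commute up to sign by the graded Leibniz rule together with~(\ref{G2alpha}); since $\phi$ has odd degree, the signs now yield anticommutativity, but this is immaterial for the spectral sequence. Second, the bottom row is locally exact except at $\Lambda^0$ by the Poincar\'e lemma, and the top row is as well, because $d\alpha=0$ locally permits $\alpha=d\psi$, after which $d-2\alpha\wedge\underbar\enskip$ is conjugate to $d$ by the scalar $e^{-2\psi}$. Third, the vertical arrows behave exactly as in~(\ref{linalg}): the column at $p=5$ is the isomorphism $\phi\wedge:\Lambda^2\xrightarrow{\simeq}\Lambda^5$, the columns at $p=3,4$ are injective with respective cokernels $\Lambda_\perp^3$ and $\Lambda_\perp^4$, and the columns at $p=6,7$ are surjective with respective kernels $\Lambda_\perp^4$ and $\Lambda_\perp^3$, as noted in the text preceding the theorem. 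The Hodge-type identifications $\Lambda^k\cong\Lambda^{7-k}$ visible in~(\ref{decompose}) then relabel the three remaining surviving bottom-row terms as $\Lambda^2$, $\Lambda^1$, $\Lambda^0$.

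Applying the spectral sequence that takes vertical cohomology first collapses the $E_1$ page onto exactly two rows, realising the two halves of~(\ref{confG2complex}): the top row carries first-order differentials induced from $d-2\alpha\wedge\underbar\enskip$, the bottom row carries first-order differentials induced from~$d$ (so it coincides with the coeffective complex of~\cite{FIL}), and the $d_2$ transgression furnishes the second-order middle operator $\Lambda_\perp^4\to\Lambda_\perp^4$. The other spectral sequence, taking horizontal cohomology first, shows that the hypercohomology on germs is concentrated at bidegrees $(0,1)$ and $(3,0)$, i.e.\ at total degrees~$1$ and~$3$, which correspond exactly to the positions $B^0$ and~$B^2$. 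This yields the claimed local exactness. Ellipticity is not affected by the zeroth-order modification by $\alpha\wedge\underbar\enskip$, so the symbol calculation reduces to the calibrated case $\alpha=0$.

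The main obstacle I anticipate is the bookkeeping: verifying that the $E_1$-differentials and the $d_2$-transgression, combined with the Hodge identifications on the bottom row, really recover the first-order operators displayed in~(\ref{confG2complex}) and that the induced bottom-row differentials indeed reproduce the coeffective complex of~\cite{FIL}. This parallels the symplectic case, but the decomposition~(\ref{decompose}) into $G_2$-irreducibles must be used explicitly, and care is needed with the sign convention forced by $\phi$ having odd degree.
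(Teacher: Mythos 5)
Your proposal is correct and follows essentially the same route as the paper: the same double complex with rows $d$ and $d-2\alpha\wedge\underbar\enskip$ and columns given by wedging with $\phi$, the first spectral sequence collapsing onto two rows to build the complex with the $d_2$ transgression as the second-order middle operator, the second spectral sequence locating the local cohomology at $\Lambda^0$ and $\Lambda^2$, and ellipticity via the symbol (Koszul) version of the same diagram. The only cosmetic difference is that the paper wedges with $\phi$ on the right so the squares commute on the nose, sidestepping the sign issue you flag (harmlessly) as anticommutativity.
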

\begin{proof} Consider the diagram
$$\begin{array}{ccccccl}
\longrightarrow&\Lambda^p&\xrightarrow{\,d-2\alpha\wedge\underbar\enskip\,}
&\Lambda^{p+1}&\xrightarrow{\,d-2\alpha\wedge\underbar\enskip\,}
&\Lambda^{p+2}&\longrightarrow\\
&\Big\uparrow\makebox[0pt][l]{\scriptsize$\underbar\enskip\wedge\phi$}
&&\Big\uparrow\makebox[0pt][l]{\scriptsize$\underbar\enskip\wedge\phi$}
&&\Big\uparrow\makebox[0pt][l]{\scriptsize$\underbar\enskip\wedge\phi$}\\
\longrightarrow&\Lambda^{p-3}&\xrightarrow{\quad d\quad}
&\Lambda^{p-2}&\xrightarrow{\quad d\quad}
&\Lambda^{p-1}&\longrightarrow.
\end{array}$$
The bottom row is the de~Rham complex and, in particular, is locally exact
except at~$\Lambda^0$. Since $d\alpha=0$, the same is true of the top row.
Since $d\phi=2\alpha\wedge\phi$, the diagram commutes. The columns behave
according to~(\ref{linalg}). Hence, the first spectral sequence of this 
double complex reads, at the $E_1$-level
$$\begin{picture}(310,40)
\put(0,0){\vector(1,0){310}}
\put(0,0){\vector(0,1){40}}
\put(10,10){\makebox(0,0){$0$}}
\put(45,10){\makebox(0,0){$0$}}
\put(80,10){\makebox(0,0){$0$}}
\put(115,10){\makebox(0,0){$0$}}
\put(150,10){\makebox(0,0){$0$}}
\put(185,9){\makebox(0,0){$\Lambda_\perp^3$}}
\put(220,10){\makebox(0,0){$\Lambda^4$}}
\put(255,10){\makebox(0,0){$\Lambda^5$}}
\put(290,10){\makebox(0,0){$\Lambda^7$}}
\put(10,30){\makebox(0,0){$\Lambda^0$}}
\put(45,30){\makebox(0,0){$\Lambda^1$}}
\put(80,30){\makebox(0,0){$\Lambda^2$}}
\put(115,29){\makebox(0,0){$\Lambda_\perp^3$}}
\put(150,30){\makebox(0,0){$0$}}
\put(185,30){\makebox(0,0){$0$}}
\put(220,30){\makebox(0,0){$0$}}
\put(255,30){\makebox(0,0){$0$}}
\put(290,30){\makebox(0,0){$0$}}
\put(202,8){\makebox(0,0){$\to$}}
\put(237,8){\makebox(0,0){$\to$}}
\put(272,8){\makebox(0,0){$\to$}}
\put(27,28){\makebox(0,0){$\to$}}
\put(62,28){\makebox(0,0){$\to$}}
\put(97,28){\makebox(0,0){$\to$}}
\end{picture}$$
Passing to the $E_2$-level constructs the complex and the second spectral 
sequence identifies its local cohomology ${\mathcal{H}}^\bullet$ as 
$${\mathcal{H}}^0=\{f\mbox{ s.t.\ }df-2f\alpha=0\},\quad
{\mathcal{H}}^2={\mathbb{R}},$$
with all others vanishing. Finally, ellipticity of this complex is inherited
from that of the de~Rham complex. Specifically, for $\Lambda^1\ni\xi\not=0$, 
the symbol complex of (\ref{confG2complex}) is constructed from the double 
complex 
$$\begin{array}{ccccccl}
\longrightarrow&\Lambda^p&\xrightarrow{\,\xi\wedge\underbar\enskip\,}
&\Lambda^{p+1}&\xrightarrow{\xi\wedge\underbar\enskip\,}
&\Lambda^{p+2}&\longrightarrow\\
&\Big\uparrow\makebox[0pt][l]{\scriptsize$\underbar\enskip\wedge\phi$}
&&\Big\uparrow\makebox[0pt][l]{\scriptsize$\underbar\enskip\wedge\phi$}
&&\Big\uparrow\makebox[0pt][l]{\scriptsize$\underbar\enskip\wedge\phi$}\\
\longrightarrow&\Lambda^{p-3}&\xrightarrow{\,\xi\wedge\underbar\enskip\,}
&\Lambda^{p-2}&\xrightarrow{\,\xi\wedge\underbar\enskip\,}
&\Lambda^{p-1}&\longrightarrow,
\end{array}$$
the rows of which are exact (they are Koszul complexes).
\end{proof}
As in the (conformally) symplectic case, this construction (and this proof of
ellipticity) avoids explicit formul{\ae} for the operators. If such formul{\ae}
are needed, then one simply needs explicitly to write out the 
branching~(\ref{linalg}) (as is done in~\cite[p.~365]{FU}). 

As in the conformally symplectic case~(\ref{conformal_symplectic_cohomology}),
we may consider the global cohomology on $M$ of the
complex~(\ref{confG2complex}), which we shall denote by $H_\phi^r(M)$ for
$0\leq r\leq 9$.
\begin{thm}
On a conformally calibrated $G_2$-manifold $(M,\phi)$, we have
$$\begin{array}{ll}H_\phi^0(M)=H^0(M,{\mathcal{H}}^0)\quad
&H_\phi^1(M)=H^1(M,{\mathcal{H}}^0)\\[4pt]
H_\phi^8(M)=H^6(M,{\mathbb{R}})
&H_\phi^9(M)=H^7(M,{\mathbb{R}})
\end{array}$$
and a long exact sequence
$$\begin{array}l
0\to H^2(M,{\mathcal{H}}^0)\to H_\phi^2(M)\to H^0(M,{\mathbb{R}})
\xrightarrow{\,\delta\,}H^3(M,{\mathcal{H}}^0)\\
\phantom{0\to H^1(M,{\mathcal{H}}^0)}
\to H_\phi^3(M)\to H^1(M,{\mathbb{R}})
\xrightarrow{\,\delta\,}H^4(M,{\mathcal{H}}^0)\to\cdots\\
\phantom{0\to H^1(M,{\mathcal{H}}^0)}
\makebox[0pt][r]{$\cdots$}\to H_\phi^6(M)\to H^4(M,{\mathbb{R}})
\xrightarrow{\,\delta\,}H^7(M,{\mathcal{H}}^0)\\
\phantom{0\to H^1(M,{\mathcal{H}}^0)}
\to H_\phi^7(M)\to H^5(M,{\mathbb{R}})
\to 0,\vphantom{\xrightarrow{\,\delta\,}}
\end{array}$$
where $\delta:H^r(M,{\mathbb{R}})\to H^{r+3}(M,{\mathcal{H}}^0)$ is given 
by cup product with the cohomology class $[\phi]\in H^3(M,{\mathcal{H}}^0)$.
\end{thm}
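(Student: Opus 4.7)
The plan is to mimic the proof of Theorem~\ref{cohomology}, now applied to the complex~(\ref{confG2complex}) and the double complex introduced in the proof of Theorem~\ref{conf_cal_G2}. Each term $B^r$ is sections of a vector bundle, hence a fine sheaf, so the hypercohomology spectral sequence for $B^\bullet$ viewed as a complex of sheaves converges to $H^r\bigl(\Gamma(M,B^\bullet)\bigr) = H_\phi^r(M)$. By Theorem~\ref{conf_cal_G2} the local cohomology of $B^\bullet$ is concentrated in two spots: $\mathcal{H}^0 = \{f : df - 2f\alpha = 0\}$ at position $0$ and $\mathcal{H}^2 = \mathbb{R}$ at position $2$. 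Hence the $E_2$-page is supported on just two rows,
$$E_2^{p,0} = H^p(M,\mathcal{H}^0), \qquad E_2^{p,2} = H^p(M,\mathbb{R}),$$
and this already yields the edge identifications: for $r = 0,1$ only $E_2^{r,0}$ can contribute (as $r-2 < 0$), giving $H_\phi^r(M) = H^r(M,\mathcal{H}^0)$, while for $r = 8,9$ only $E_2^{r-2,2}$ can contribute (as $H^r(M,\mathcal{H}^0)$ vanishes for $r > \dim M = 7$), giving $H_\phi^r(M) = H^{r-2}(M,\mathbb{R})$.

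Next I would analyse the higher differentials. Because the only nonzero rows are $q=0$ and $q=2$, the differential $d_2$ lands in a zero row and vanishes identically. The first non-trivial differential is therefore
$$d_3 : E_3^{p,2} \longrightarrow E_3^{p+3,0}, \qquad H^p(M,\mathbb{R}) \longrightarrow H^{p+3}(M,\mathcal{H}^0),$$
and no further differentials can be nonzero, so the sequence degenerates at $E_4$. Splicing together the resulting short exact sequences
$$0 \to \operatorname{coker}\bigl(d_3 : H^{r-3}(M,\mathbb{R}) \to H^r(M,\mathcal{H}^0)\bigr) \to H_\phi^r(M) \to \ker\bigl(d_3 : H^{r-2}(M,\mathbb{R}) \to H^{r+1}(M,\mathcal{H}^0)\bigr) \to 0$$
produces the advertised long exact sequence.

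The remaining and only substantive point is the identification $\delta = d_3 = [\phi]\cup\underbar{\enskip}$. The relation $d\phi = 2\alpha\wedge\phi$ precisely says that $\phi$ is closed in the top-row resolution of $\mathcal{H}^0$, so it defines a class $[\phi] \in H^3(M,\mathcal{H}^0)$. To verify the claim I would run the standard trace of $d_3$ through the double complex: represent a class in $H^p(M,\mathbb{R})$ by a $d$-closed $p$-form $\omega$ in the bottom row, lift via $\wedge\phi$ to $\omega\wedge\phi$ in the top row, and note that
$$(d - 2\alpha\wedge)(\omega\wedge\phi) = d\omega\wedge\phi \pm \omega\wedge(d\phi - 2\alpha\wedge\phi) = 0,$$
so $\omega\wedge\phi$ represents a class in $H^{p+3}(M,\mathcal{H}^0)$; this is cup product with $[\phi]$ by definition.

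The expected obstacle is purely bookkeeping: matching the signs and the shift conventions in the spectral sequence with the geometric cup product. Following the parenthetical remark after Theorem~\ref{cohomology}, a reader preferring to avoid spectral-sequence machinery may instead replace the whole argument by a direct diagram chase on the double complex of Theorem~\ref{conf_cal_G2}, which makes the identification of $\delta$ with multiplication by $\phi$ entirely transparent.
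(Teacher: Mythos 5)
Your proposal is correct and takes essentially the same approach as the paper: the paper's proof is precisely the hypercohomology spectral sequence for $B^\bullet$ with the two nonzero rows $H^p(M,{\mathcal{H}}^0)$ and $H^p(M,{\mathbb{R}})$, the connecting homomorphism first appearing as $d_3$ (since ${\mathcal{H}}^2$ rather than ${\mathcal{H}}^1$ carries the second local cohomology) and being identified with cup product by~$[\phi]$. Your write-up merely supplies the degeneration, edge-identification, and $d_3$-tracing details that the paper declares ``immediate.''
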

\begin{proof}Immediate from the hypercohomology spectral sequence as for the
proof of Theorem~\ref{cohomology} except that the connecting homomorphism
$\delta$ does not appear until the $E_3$-level.
\end{proof}
In the calibrated case (when $\alpha=0$),
$H^r(M,{\mathcal{H}}^0)=H^r(M,{\mathbb{R}})$ and we see that $H_\phi^r(M)$ is
determined by the de~Rham cohomology of $M$ and the action of $[\phi]\in
H^3(M,{\mathbb{R}})$ by cup product.

\section{Other geometries}
There are several other geometries defined by special $k$-forms for which one
can apply similar reasoning. Certainly, there are
${\mathrm{Spin}}(7)$-geometries in dimension $8$ defined~\cite{B} by a fundamental
$4$-form~$\Phi$. The construction given in this article extends to this 
case and, by the work of Joyce~\cite{J}, there are non-trivial compact 
examples with $d\Phi=0$. 

Also, there are ${\mathrm{SO}}(3)\times{\mathrm{SO}}(3)$-geometries in
dimension $9$ defined~\cite{FN} by a fundamental $5$-form and
${\mathrm{SU}}(4)\times{\mathrm{U}}(1)$-geometries in dimension $10$
defined~\cite{DNW} by a fundamental $6$-form or $4$-form but, for the moment,
it is unclear whether there are any examples of such geometries that are not
locally homogeneous.

\end{document}